\documentclass[12pt]{amsart}
\usepackage{amsmath, amssymb}
\usepackage[a4paper]{geometry}

\usepackage{tkz-graph}

\swapnumbers
\theoremstyle{definition}
\newtheorem{df}{Definition}[section]

\theoremstyle{plain}
\newtheorem{tw}[df]{Theorem}

\newtheorem{wn}[df]{Corollary}
\newtheorem{fact}[df]{Fact}

\newcommand{\ktrans}[1][k]{\ensuremath{#1}-transitive}
\newcommand{\ktranso}[1][k]{\ktrans[#1]\ orientation}

\newcommand{\TkC}[1][k]{\ensuremath{\mathrm{T}_{#1}\mathrm{C}}}

\newcommand{\kns}[2]{\ensuremath{\big({#1}\!:\!{#2}\big)}}
\newcommand{\knsu}[2]{\ensuremath{\big[{#1}\!:\!{#2}\big]}}

\newcommand{\kn}[1][k]{\kns{{#1}}{n}}
\newcommand{\knu}[1][k]{\knsu{{#1}}{n}}

\newcommand{\pwyst}[1]{\textit{#1}}

\begin{document}

\thanks{$^1$ Institute of Mathematics, Jan Kochanowski University, 15 \'Swi\c{e}tokrzyska street, 25-406 Kielce, Poland. E-mail: krzysztof.pszczola@ujk.edu.pl.} 
\author[Krzysztof Pszczo\l a]{Krzysztof Pszczo\l a$^1$ }

\title{On $k$-transitive closures of directed paths}
\subjclass[2010]{05C20 (primary), and 05C07, 05C38 (secondary)}
\keywords{Digraph, transitive graph, $k$-transitive digraph, $k$-transitive closure of an oriented graph}

\begin{abstract}
In this paper we study the structure of \ktrans\ closures of directed paths and formulate several properties. 
Concept of \ktranso\ generalize the traditional concept of transitive orientation of a graph. 
\end{abstract}

\maketitle

We use the standard notation. By an \pwyst{edge} we mean an unoriented pair of vertices, and by an \pwyst{arc} we mean an oriented pair of vertices. For a given graph $G$, $V(G)$ and $E(G)$ denotes the set of its vertices and the set of its edges, respectivly. For a digraph $G$, we write $A(G)$ for the set of its arcs. By an \pwyst{oriented graph} we mean such a digraph that if $(a,b)$ is an arc, then $(b,a)$ is not. All graphs and digraphs in this paper are finite.

\section{Motivation}

Orientation of a graph $G$ is called \pwyst{transitive} if for every $(a,b)\in A(G)$, $(b,c)\in A(G)$, 
also $(a,c)\in A(G)$. 
This concept was studied by many authors in numerous papers, see the survey \cite{Kelly1985} for example. 
The concept of transitive orientation was generalized in several ways in \cite{GyarfasJacobsonKinch1988} and \cite{Tuza1994}, \cite{Hernandez-Cruz2012}, and other papers.

\bigskip

A digraph is called \pwyst{$k$-transitive} if every directed path of the length $k$ has a \textit{shortcut}
joining the beginning and the end of this path.
In other words, if $(v_0, \ldots , v_k)$ is a path in the digraph $G$, then $(v_0,v_k)\in A(G)$.

\medskip

Note that our term ``\ktrans'' coresponds to 
``$(k,1)$-transitive'' in \cite{GyarfasJacobsonKinch1988} and \cite{Tuza1994}. 

\medskip

\medskip

A \ktrans\ closure of an oriented graph $G=(V,A)$ is an oriented graph $\TkC (G)$ such that 
\begin{enumerate}
\item $V\left(\TkC (G)\right) = V(G)$, 
\item $A(G) \subset A\left(\TkC (G)\right)$, 
\item $\TkC (G)$ is 
\ktrans , \label{ktrans_indef}
\item and it has the minimal (by inclusion) set of arcs among all graphs with the above stated properties.
\end{enumerate}

\medskip 

Observe that there are oriented graphs for which the \ktrans\ closure does not exist. For example in a cyclically oriented cycle $C_{k+1}$ it is not possible to add arcs to fulfill the condition (\ref{ktrans_indef}). 

If the \ktrans\ closure does exist for some oriented graph, it is unique.

\medskip

Note that this definition is a partial answer to the point (4) in \cite[p.~41]{GyarfasJacobsonKinch1988}.

\bigskip

The aim of this paper is to describe \ktrans\ closures of directed paths.

\section{Structure of the \ktrans\ closure of the directed path}
				  
Instead of $\TkC (P_{n-1})$ we write \kn\ to denote the \ktrans\ closure of an oriented path on $n$ vertices. We label the vertices by natural numbers $1, 2, \ldots , n$ and assume that $(i,i+1)\in A(P_{n-1})$ for $1\leq i<n$. 

\medskip

Although the graph \kn\ is oriented, some of the properties will be stated for simple graphs obtained by ``forgetting'' the orientation. We belive that it is clear from the context, but to be precise, for the unoriented case we write \knu.

\medskip

In this paper by a degree sequence of a graph \knu\ we mean a sequence $(\text{deg}(1), \ldots, \text{deg}(n))$. (In/out)degree sequence of a graph \kn\ is defined in a similiar way.

\medskip

Observe that $\knu[2]$ is just the complete graph $K_n$, and $\kn[2]$ is the tournament on $n$ vertices. 

\medskip

The starting point in a construction of the \ktrans\ closure of the path $P_{n-1}$ is to add arcs $(i,i+k)$. Then we 
add arcs $(i,i+2k-1)$, at the next stage arcs $(i,i+3k-2)$, and so on. 
This construction shows that for every $k, n \in \mathbb{N}$, \kn\ is well defined. 

\medskip

The key observations are:

\begin{fact}\label{induced}
Adding one vertex to the path adds only arcs ending in this new vertex. In other words, $\kns{k}{n}$ is an induced subgraph of \kns{k}{n+1}.  \qed
\end{fact}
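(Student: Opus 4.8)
The plan is to prove the equivalent statement that the subdigraph of $\kns{k}{n+1}$ induced on $\{1,\dots,n\}$ equals $\kn$, i.e.\ that passing from the path on $n$ vertices to the path on $n+1$ vertices creates no arc within $\{1,\dots,n\}$ and no arc leaving $n+1$. The guiding observation is that the new vertex $n+1$ is a sink of $P_n$, and that among the $k$-transitive oriented supergraphs of $P_n$ there is one in which $n+1$ remains a sink; minimality of $\kns{k}{n+1}$ will then force $n+1$ to be a sink in $\kns{k}{n+1}$ as well, and, in the same spirit, it will force $\kns{k}{n+1}$ to introduce no superfluous arc among the old vertices.

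To carry this out I would first construct an explicit witness $G^{*}$ on the vertex set $\{1,\dots,n+1\}$. Let $S\subseteq\{1,\dots,n\}$ be the smallest set with $n\in S$ that is closed under the following rule: if $j\in S$ and $(u_0,u_1,\dots,u_{k-1})$ is a directed path in $\kn$ with $u_{k-1}=j$, then $u_0\in S$. Define $A(G^{*})=A(\kn)\cup\{(i,n+1):i\in S\}$. Then $G^{*}$ is an oriented graph (no arc leaves $n+1$, so no $2$-cycle is created), it contains $P_n$ (the arc $(n,n+1)$ is present since $n\in S$, and $A(P_{n-1})\subseteq A(\kn)$), and $n+1$ is a sink of $G^{*}$.

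The crux is to verify that $G^{*}$ is $k$-transitive. Let $(v_0,\dots,v_k)$ be a directed path of length $k$ in $G^{*}$. Since $n+1$ has no out-arc in $G^{*}$, it can occur in this path only as $v_k$. If it does not occur at all, every arc of the path lies in $\kn$, so the shortcut $(v_0,v_k)$ lies in $A(\kn)\subseteq A(G^{*})$ because $\kn$ is $k$-transitive. If $v_k=n+1$, then $(v_0,\dots,v_{k-1})$ is a directed path of length $k-1$ in $\kn$, and the arc $(v_{k-1},n+1)$ forces $v_{k-1}\in S$; the closure property defining $S$ then gives $v_0\in S$, so $(v_0,v_k)=(v_0,n+1)\in A(G^{*})$. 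Hence $G^{*}$ satisfies properties (1)--(3) of the definition of the $k$-transitive closure of $P_n$.

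Finally I would invoke minimality twice. From $G^{*}$ one can repeatedly delete arcs while preserving orientedness, $k$-transitivity and the containment of $P_n$, arriving at a minimal such digraph; by the uniqueness of the $k$-transitive closure this digraph is $\kns{k}{n+1}$, whence the arc set of $\kns{k}{n+1}$ is contained in that of $G^{*}$. In particular $\kns{k}{n+1}$ has no arc leaving $n+1$ and, within $\{1,\dots,n\}$, only arcs of $\kn$. For the opposite inclusion, the subdigraph of $\kns{k}{n+1}$ induced on $\{1,\dots,n\}$ is a $k$-transitive oriented supergraph of $P_{n-1}$ --- a directed $k$-path in it is a directed $k$-path in $\kns{k}{n+1}$, and its shortcut again has both ends in $\{1,\dots,n\}$ --- so by minimality of $\kn$ it contains $\kn$. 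The two inclusions show that the subdigraph of $\kns{k}{n+1}$ induced on $\{1,\dots,n\}$ is exactly $\kn$, which is the statement of Fact~\ref{induced}. The one delicate point is the $k$-transitivity check for $G^{*}$: it goes through only because $n+1$ is kept a sink, so that no directed path can pass \emph{through} $n+1$ and every potentially troublesome $k$-path reduces to a $(k-1)$-path in $\kn$ followed by a single arc into $n+1$.
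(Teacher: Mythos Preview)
Your argument is correct. The paper, however, gives no proof at all: the \texttt{\textbackslash qed} immediately follows the statement, and the fact is treated as evident from the iterative description of the construction given just before it (``add arcs $(i,i+k)$, then $(i,i+2k-1)$, then $(i,i+3k-2)$, \dots''), from which one sees directly that appending a new rightmost vertex only creates arcs pointing into it. Your route is genuinely different: instead of appealing to that explicit layered construction, you work purely from the abstract definition (minimality and uniqueness of the closure), building a witness $G^{*}$ in which $n+1$ is a sink and then squeezing $\kns{k}{n+1}$ between $G^{*}$ and $\kn$. This buys you a self-contained proof that does not rely on having first characterised the arc set (Fact~\ref{lengthofshortcut}), whereas the paper's implicit justification really leans on that characterisation; on the other hand, the paper's approach is a one-liner once the arc lengths are known.
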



\begin{fact}\label{lengthofshortcut}
In the graph \kn, 
$(i,j) \in A\left(\kn\right)$ 
iff $j-i=1+l(k-1)$ for some $0\leq l \leq (n-2)/(k-1)$. 
\end{fact}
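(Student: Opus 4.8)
The plan is to identify the arc set of $\kn$ explicitly. Let $H_n$ be the oriented graph on $\{1,\dots,n\}$ whose arcs are exactly the pairs $(i,j)$ with $1\le i<j\le n$ and $j-i=1+l(k-1)$ for some integer $l\ge 0$; the goal is to prove $H_n=\kn$. We assume throughout $k\ge 2$ (for $k=1$ the statement degenerates). First I would note the easy facts: every arc of $H_n$ strictly increases the vertex label, so $H_n$ is genuinely an oriented graph with no $2$-cycles, and it contains $P_{n-1}$ since the arcs of length $1$ correspond to $l=0$.

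The first substantive step is to verify that $H_n$ is \ktrans. Let $(v_0,\dots,v_k)$ be a directed path in $H_n$. Since arcs increase labels, $v_0<v_1<\dots<v_k$, and for each $t$ we may write $v_{t+1}-v_t=1+l_t(k-1)$ with $l_t\ge 0$. Summing,
\[
v_k-v_0=\sum_{t=0}^{k-1}\bigl(1+l_t(k-1)\bigr)=k+(k-1)\sum_{t=0}^{k-1}l_t=1+(k-1)\Bigl(1+\sum_{t=0}^{k-1}l_t\Bigr),
\]
which is again of the admissible form, and $v_k\le n$; hence $(v_0,v_k)\in A(H_n)$. This single computation is the heart of the argument.

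Next I would show that every arc of $H_n$ is forced, i.e.\ must belong to $\kn$. Fix $(i,j)\in A(H_n)$ with $j=i+1+l(k-1)$ and induct on $l$. For $l=0$ the arc $(i,i+1)$ lies in $P_{n-1}\subseteq\kn$. For $l\ge 1$, consider the sequence $i,\ i+1+(l-1)(k-1),\ i+2+(l-1)(k-1),\ \dots,\ i+(k-1)+(l-1)(k-1),\ j$: it is a directed path on $k+1$ vertices, all contained in $\{i,\dots,j\}\subseteq\{1,\dots,n\}$; its first arc has admissible length $1+(l-1)(k-1)$ and so lies in $\kn$ by the induction hypothesis, and its remaining $k-1$ arcs have length $1$ and lie in $P_{n-1}\subseteq\kn$. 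Since $\kn$ is \ktrans, the shortcut $(i,j)$ lies in $A\left(\kn\right)$. Hence $A(H_n)\subseteq A\left(\kn\right)$.

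Finally, $H_n$ is a \ktrans\ oriented graph containing $P_{n-1}$ by the first step, so by the minimality clause in the definition of the closure we get $A\left(\kn\right)\subseteq A(H_n)$, and therefore $A\left(\kn\right)=A(H_n)$ — which is exactly the claim, the range $0\le l\le(n-2)/(k-1)$ being just the restatement of $1\le j-i\le n-1$. The one point that needs a careful word is this last deduction: "minimal by inclusion" together with uniqueness of the closure is what allows us to conclude containment in $H_n$ (by finiteness $H_n$ contains some arc-minimal \ktrans\ supergraph of $P_{n-1}$, which by uniqueness is $\kn$). I expect this bookkeeping, rather than any genuine difficulty, to be the main thing to state precisely, together with keeping the "$+1$ versus $(k-1)$" index offsets straight.
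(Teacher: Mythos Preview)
Your proof is correct and rests on the same single computation as the paper's: summing $k$ arc-lengths of the form $1+l_t(k-1)$ gives $k+(k-1)\sum l_t=1+(k-1)\bigl(1+\sum l_t\bigr)$, so the set of admissible arc-lengths is closed under taking $k$-shortcuts. The only difference is packaging: the paper obtains the inclusion $A(H_n)\subseteq A\bigl(\kn\bigr)$ by pointing to the staged construction (arcs of length $1$, then $k$, then $2k-1$, \dots) and the reverse inclusion by induction on $n$, whereas you run an induction on $l$ for the first inclusion and invoke minimality and uniqueness of the closure for the second --- a slightly cleaner use of the definition, but the same idea.
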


\begin{proof}
It follows directly from the construction described above that 
$$\left\{(i,j): \exists\; 0\leq l \leq (n-2)/(k-1),\, j-i=1+l(k-1)\right\} \subset A\left(\kn\right).$$ 
To show the other inclusion we use the induction on $n$. 
First observe that for $n\leq k$ all arcs are of the form $(i,i+1)$. 
Assume that all arcs in $(k:n-1)$ have the length $1+l(k-1)$ for some $l\geq 0$. To obtain a $k$-shortcut in \kn\ we need $k$ arcs, each of them of length $1+l_i(k-1)$, $l_i\geq 0$. So $j-i = 1+l_1(k-1) + 1+l_2(k-1) + \cdots + 1+l_k(k-1) = k + (k-1)(l_1 + \cdots l_k) = 1 + (k-1)(1 + l_1 + \cdots l_k)$.
\end{proof}

In Figure \ref{kns511} we present the graph \kns{5}{11}\ as an example.

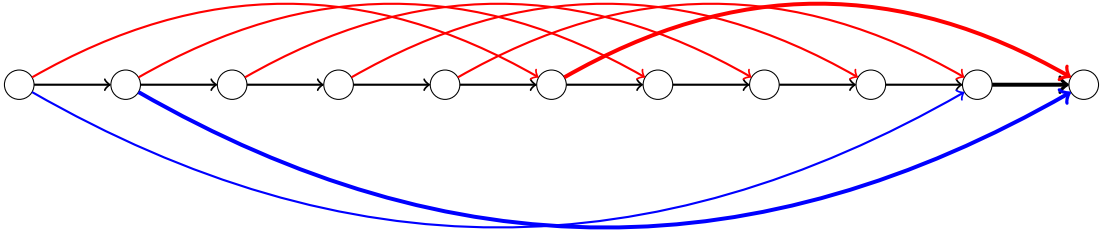
\begin{figure}
\begin{center}
\begin{tikzpicture}	
\GraphInit[vstyle=Simple]
\SetVertexNoLabel
\tikzset{VertexStyle/.style = {%
shape = circle,
fill = white,
minimum size = 5pt,draw}}
\tikzset{EdgeStyle/.style = {->}}
\Vertices[unit=1.4]{line}{v1,v2,v3,v4,v5,v6,v7,v8,v9,v10,v11}
\Edges(v1,v2,v3,v4,v5,v6,v7,v8,v9,v10)
\SetUpEdge[style = {->}, lw = 1.5pt]
\Edge(v10)(v11)
\SetUpEdge[style = {->,bend left}, 
color=red]
\Edge(v1)(v6)
\Edge(v2)(v7)
\Edge(v3)(v8)
\Edge(v4)(v9)
\Edge(v5)(v10)
\SetUpEdge[style = {->,bend left}, lw = 1.5pt,
color=red]
\Edge(v6)(v11)
\SetUpEdge[style = {->,bend right}, 
color=blue]
\Edge(v1)(v10)
\SetUpEdge[style = {->,bend right}, lw = 1.5pt,
color=blue]
\Edge(v2)(v11)
\end{tikzpicture}

\end{center}
\caption{The graph \kns{5}{11}. 
All arcs ending in the last vertex 
are drawn with thick lines.} \label{kns511}
\end{figure}

\section{Some properties}

From the observations mentioned above, we conclude several properties of graphs \kn\ and \knu.

\begin{fact}
For $n\leq k$, $\TkC (P_{n-1}) = P_{n-1}$. So for $n\leq k$, the graph \kn\ is just the path $P_{n-1}$. \qed
\end{fact}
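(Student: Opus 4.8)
The plan is to observe that for $n\le k$ the path $P_{n-1}$ is already $k$-transitive, and for a rather cheap reason, and then to check that it is inclusion-minimal among the oriented graphs satisfying conditions (1)--(3) in the definition of $\TkC$.

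First I would recall that a directed path of length $k$ has $k$ arcs and therefore $k+1$ vertices. The digraph $P_{n-1}$ has only $n$ vertices, and its longest directed path is $(1,2,\ldots,n)$, of length $n-1$. Since $n\le k$ yields $n-1\le k-1<k$, the digraph $P_{n-1}$ contains no directed path of length $k$ at all. Hence the defining implication of $k$-transitivity --- if $(v_0,\ldots,v_k)$ is a directed path then $(v_0,v_k)\in A$ --- holds vacuously, so $P_{n-1}$ is $k$-transitive and thereby meets conditions (1), (2) and (3) of the definition of the $k$-transitive closure.

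It remains to verify condition (4). Any oriented graph $H$ satisfying (1)--(3) has $V(H)=V(P_{n-1})$ and $A(P_{n-1})\subseteq A(H)$ by (1) and (2); in particular the arc set of $P_{n-1}$ is contained in the arc set of every competitor, so it is automatically the unique inclusion-minimal one. Therefore $\TkC(P_{n-1})=P_{n-1}$, and with our notation $\kn=P_{n-1}$. Alternatively, the same conclusion drops out of Fact \ref{lengthofshortcut}: an arc $(i,j)$ of $\kn$ must satisfy $j-i=1+l(k-1)$ with $l\ge 0$, but $l\ge 1$ would give $j-i\ge k>k-1\ge n-1$, which is impossible, so $l=0$ and $j=i+1$.

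The only point demanding a little care is the off-by-one bookkeeping between the length of a directed path and its number of vertices; once that is straight there is no real obstacle. In particular, minimality requires no optimization argument, since (1) and (2) already force $A(P_{n-1})$ into every candidate, so the ``smallest'' admissible graph is simply $P_{n-1}$ itself.
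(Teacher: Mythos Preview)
Your proof is correct and is precisely the expansion of what the paper leaves implicit: the statement is marked \qed\ with no argument given, and your vacuous-transitivity observation together with the trivial minimality check (and the optional appeal to Fact~\ref{lengthofshortcut}) is exactly the intended reasoning.
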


We can observe the following block structure in indegree/outdegree sequences of graphs \kn: 

\begin{tw}\label{orienteddegreesequences}
Let $n=2+l(k-1)+m$ for some $l\in \mathbb{N}$ and $0\leq m<k-1$.
In the oriented graph \kn\ the indegree sequence is built from uniform ``blocks'' of length $k-1$ and has the form 
$$(0, \underbrace{1, \ldots , 1}_{k-1 \mathrm{\ times\ }}, \underbrace{2, \ldots , 2}_{k-1 \mathrm{\ times}}, \ldots , \underbrace{l, \ldots , l}_{k-1 \mathrm{\ times}}, \underbrace{l+1, \ldots , l+1}_{m+1 \mathrm{\ times\ }}).$$ 
Similarly, the outdegree sequence is built from uniform ``blocks'' of length $k-1$ and has the form 
$$(\underbrace{l+1, \ldots , l+1}_{m+1 \mathrm{\ times\ }}, \underbrace{l, \ldots , l}_{k-1 \mathrm{\ times}}, \ldots , \underbrace{2, \ldots , 2}_{k-1 \mathrm{\ times}}, \underbrace{1, \ldots , 1}_{k-1 \mathrm{\ times\ }},0).$$  
\end{tw}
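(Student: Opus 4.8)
The plan is to read the degrees off directly from Fact \ref{lengthofshortcut}. Write $d^-(j)$ for the indegree of vertex $j$ in \kn. An arc $(i,j)$ with $i<j$ is present exactly when $j-i=1+l(k-1)$ for some integer $l\ge 0$ with $l\le(n-2)/(k-1)$. Since $i\ge 1$ forces $1+l(k-1)\le j-1$, and since $j\le n$ makes the inequality $l\le(n-2)/(k-1)$ automatic once $l\le(j-2)/(k-1)$, the admissible values of $l$ are precisely $0,1,\dots,\lfloor(j-2)/(k-1)\rfloor$ for $j\ge 2$, and there are none for $j=1$. Hence
\[
d^-(1)=0,\qquad d^-(j)=\left\lfloor\frac{j-2}{k-1}\right\rfloor+1\quad\text{for }2\le j\le n.
\]

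Next I would describe the level sets of the step function $j\mapsto\lfloor(j-2)/(k-1)\rfloor$. For an integer $b\ge 1$ the value $b-1$ is attained exactly for $(b-1)(k-1)+2\le j\le b(k-1)+1$, an interval of exactly $k-1$ consecutive integers; so the vertices with $d^-(j)=b$ form a block of length $k-1$, provided this whole interval lies inside $\{1,\dots,n\}$. Writing $n=2+l(k-1)+m$ with $0\le m<k-1$ (this is just division with remainder of $n-2$ by $k-1$), one checks that the blocks with indegree $1,2,\dots,l$ are complete, while the block with indegree $l+1$ is cut off after its first $m+1$ vertices $l(k-1)+2,\dots,l(k-1)+2+m$, the last of which is $n$. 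As $1+l(k-1)+(m+1)=n$, every vertex is accounted for, and the indegree sequence has exactly the stated shape (when $l=0$ there are no full blocks and the tail of $m+1$ copies of $l+1$ becomes $m+1$ ones, matching $\kn=P_{n-1}$ for $n\le k$).

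For the outdegree sequence I would use the reversal symmetry of the construction: reversing every arc of $P_{n-1}$ and relabelling each vertex $i$ by $n+1-i$ carries $P_{n-1}$ onto itself, so by uniqueness of the $k$-transitive closure it is a digraph isomorphism of \kn\ onto itself that interchanges in- and out-degrees. Consequently $d^+(i)=d^-(n+1-i)$ for every $i$, so the outdegree sequence is the indegree sequence reversed, which is precisely the displayed form. Alternatively one repeats the count above with the restriction ``$i\ge 1$'' replaced by ``$j\le n$'', obtaining $d^+(n)=0$ and $d^+(i)=\lfloor(n-i-1)/(k-1)\rfloor+1$ for $i<n$, and then substitutes $i=n+1-j$.

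The argument is essentially bookkeeping; the only delicate point is the truncated top block — one must check that the parametrisation $n=2+l(k-1)+m$ really forces the block of indegree $l+1$ to have exactly $m+1$ vertices (including the degenerate case $m=0$, where this block is the single vertex $n$ of indegree $l+1$), and that no earlier block of length $k-1$ is accidentally shortened.
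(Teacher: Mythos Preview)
Your argument is correct and follows essentially the same route as the paper: you use Fact~\ref{lengthofshortcut} to count the arcs ending at each vertex, identify that the first vertex of indegree $b$ sits at position $(b-1)(k-1)+2$, and then appeal to the reversal symmetry for the outdegree part. Your version is simply more explicit, deriving the closed formula $d^-(j)=\lfloor(j-2)/(k-1)\rfloor+1$ and carefully verifying the size of the truncated final block, whereas the paper states the block boundaries directly and leaves the outdegree case to ``start from the last vertex''.
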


\begin{proof}
The proof follows from Facts \ref{lengthofshortcut} and \ref{induced}. 
We prove the part concerning the indegree sequence. 
First note that the indegree of the first vertex is 0. For the next $k-1$ vertices there is no arcs ending in them other than the arcs in the initial path, so their indegree is 1. First vertex of indegree 2 is the $(k+1)$-th vertex. First vertex of indegree 3 is the $2k$-th vertex, and  first vertex of indegree $j$ is the $((j-1)(k-1)+2)$-th vertex.

The proof for the outdegree sequence is similiar; we just start from the last vertex.
\end{proof}

\begin{wn}\label{regular}
The graph $\knsu{k}{2+l(k-1)}$ is $(l+1)$-regular for every $l\in \mathbb{N}$. 
\end{wn}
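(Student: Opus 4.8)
The plan is to read the statement off Theorem~\ref{orienteddegreesequences} by specializing to $m=0$. Taking $n=2+l(k-1)$ forces $m=0$ in the hypothesis of that theorem, so the last block of the indegree sequence and the first block of the outdegree sequence each collapse to a single entry. Hence the indegree sequence of \kn\ is $(0,1^{k-1},2^{k-1},\ldots,l^{k-1},l+1)$ and the outdegree sequence is $(l+1,l^{k-1},(l-1)^{k-1},\ldots,1^{k-1},0)$, where an exponent records how many times the value is repeated.

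I would then observe that, \kn\ being an oriented graph, forgetting orientations identifies no two arcs incident to a common vertex, so the degree of a vertex $i$ in \knu\ equals $\mathrm{indeg}(i)+\mathrm{outdeg}(i)$. It therefore suffices to show that the termwise sum of the two sequences above is the constant sequence with all entries $l+1$. At the ends this is immediate: position $1$ gives $0+(l+1)$ and position $n$ gives $(l+1)+0$. For an interior position $i$ there is a unique $j\in\{1,\ldots,l\}$ with $2+(j-1)(k-1)\le i\le 1+j(k-1)$; by Theorem~\ref{orienteddegreesequences} this is precisely the indegree block of $i$, so $\mathrm{indeg}(i)=j$. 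A short index computation using $n=2+l(k-1)$ shows that the value-$(l+1-j)$ block of the outdegree sequence occupies exactly these same positions, so $\mathrm{outdeg}(i)=l+1-j$, and adding gives $\mathrm{indeg}(i)+\mathrm{outdeg}(i)=l+1$.

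The only genuinely fiddly point is matching the outdegree block to the indegree block, and even this can be sidestepped via the symmetry $\sigma\colon i\mapsto n+1-i$: by Fact~\ref{lengthofshortcut} an arc $(a,b)$ lies in $A(\kn)$ iff $(\sigma(b),\sigma(a))$ does (since $\sigma(a)-\sigma(b)=b-a$), whence $\mathrm{outdeg}(i)=\mathrm{indeg}(n+1-i)$; feeding in the explicit indegree sequence, the indegree blocks containing $i$ and $n+1-i$ have indices summing to $l+1$, which again yields the claim. Either way all vertices of \knu\ have degree $l+1$, so $\knsu{k}{2+l(k-1)}$ is $(l+1)$-regular for every $l\in\mathbb{N}$.
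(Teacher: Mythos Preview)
Your proof is correct and follows essentially the same approach as the paper: invoke Theorem~\ref{orienteddegreesequences} with $m=0$ and observe that the termwise sum of the indegree and outdegree sequences is the constant sequence $(l+1,\ldots,l+1)$. You have merely supplied the details (and an alternative symmetry argument) that the paper leaves to the reader.
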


\begin{proof}
This is a consequence of Theorem \ref{orienteddegreesequences}; just observe that summing up the indegree and outdegree sequences gives the constant sequence $(l+1, \ldots , l+1)$.
\end{proof}

\begin{wn}\label{twodegrees}
For every $l\in \mathbb{N}$, and for every $0< m<k-1$, all vertices of the graph $\knsu{k}{2+l(k-1)+m}$ has degree $l+1$ or $l+2$. Morover, if we put $a=l+1$ and $b=l+2$, the degree sequence is built from ``blocks'' of the form 
$$(a, \underbrace{b, \ldots b}_{m \mathrm{\ times\ }}, \underbrace{a, \ldots a}_{k-m-2 \mathrm{\ times}})$$ 
repeated to get the sequence of the length $2+l(k-1)+m$. Note that the last ``block'' has the length $m+2 \;(\mathrm{mod} (k-1))$. 
\end{wn}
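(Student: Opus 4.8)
The plan is to obtain the (unoriented) degree sequence of $\knu$, where $n=2+l(k-1)+m$, by adding the indegree and outdegree sequences of $\kn$ vertex by vertex, since $\deg(i)=\operatorname{indeg}(i)+\operatorname{outdeg}(i)$ in an oriented graph. First I would note that what Theorem~\ref{orienteddegreesequences} really records is the closed form $\operatorname{indeg}(i)=\big\lceil (i-1)/(k-1)\big\rceil$ for $1\le i\le n$ — the clause ``the first vertex of indegree $j$ is the $((j-1)(k-1)+2)$-th one'' is exactly this — and, since reflecting the path by $i\mapsto n+1-i$ carries $\kn$ to its own reverse (this is why the outdegree sequence in Theorem~\ref{orienteddegreesequences} is the reverse of the indegree one), $\operatorname{outdeg}(i)=\operatorname{indeg}(n+1-i)=\big\lceil (n-i)/(k-1)\big\rceil$. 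Hence $\deg(i)=\big\lceil (i-1)/(k-1)\big\rceil+\big\lceil (n-i)/(k-1)\big\rceil$.

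Next I would evaluate this sum. Writing $i-1=q(k-1)+r$ with $0\le r\le k-2$, we have $\operatorname{indeg}(i)=q+\lceil r/(k-1)\rceil$, which is $q$ if $r=0$ and $q+1$ if $1\le r\le k-2$. Since $n-1=l(k-1)+(m+1)$, pulling the integer multiple $(l-q)(k-1)$ out of the ceiling gives $\operatorname{outdeg}(i)=(l-q)+\big\lceil (m+1-r)/(k-1)\big\rceil$, so
\[
\deg(i)=l+\Big\lceil \frac{r}{k-1}\Big\rceil+\Big\lceil \frac{m+1-r}{k-1}\Big\rceil .
\]
Because $0<m+1\le k-1$ (as $0<m<k-1$) and $0\le r\le k-2$, the last ceiling equals $1$ for $0\le r\le m$ and $0$ for $m+1\le r\le k-2$; combined with the value of the first ceiling this yields $\deg(i)=l+1$ when $r=0$, $\deg(i)=l+2$ when $1\le r\le m$, and $\deg(i)=l+1$ when $m+1\le r\le k-2$. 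In particular every vertex has degree $l+1$ or $l+2$.

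Finally I would read off the block form by grouping the vertices according to the value of $q$: the vertices with a fixed $q$ are exactly those whose residue $r$ runs through $0,1,\dots,k-2$ in order, so by the case analysis their degrees spell out $(a,\underbrace{b,\dots,b}_{m},\underbrace{a,\dots,a}_{k-m-2})$ with $a=l+1$, $b=l+2$ — a complete ``block'' of length $k-1$. Reading the whole sequence for $i=1,\dots,n$ thus concatenates such blocks, and the only truncation is in the last group: since $n-1=l(k-1)+(m+1)$, the residue reached by the vertex $n$ is $(m+1)\bmod(k-1)$, so the last block carries the entries for $r=0,1,\dots,(m+1)\bmod(k-1)$, i.e.\ it has $m+2\pmod{k-1}$ entries — with the convention that the value $0$ here means the sequence ends on a complete block (which is what happens for $m=k-3$), while for $m=k-2$ the last block is the single entry $a$.

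The one genuinely delicate point, and where a hasty argument would slip, is the bookkeeping at the ends: verifying that $\operatorname{indeg}(i)=\lceil (i-1)/(k-1)\rceil$ holds uniformly in $i$, including on the short final block of the indegree sequence; checking the two evaluations of $\lceil(m+1-r)/(k-1)\rceil$ in the extreme cases $m=1$ and $m=k-2$, where one of the residue ranges $\{1,\dots,m\}$, $\{m+1,\dots,k-2\}$ (or the class $r=m+1$) is empty; and phrasing ``the last block has length $m+2\pmod{k-1}$'' so that it also correctly covers the case in which that block turns out to be complete. None of this is hard, but it is the part that has to be written with care.
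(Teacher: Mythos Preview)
Your argument is correct and follows the same route as the paper: derive the unoriented degrees by summing the indegree and outdegree sequences of Theorem~\ref{orienteddegreesequences}. The paper's own proof is literally the single sentence ``This is another consequence of Theorem~\ref{orienteddegreesequences}'', so your closed form $\operatorname{indeg}(i)=\lceil (i-1)/(k-1)\rceil$ together with the residue analysis $i-1=q(k-1)+r$ is a fully worked-out version of what the paper leaves implicit; the edge cases you flag (the length of the final block, and the interpretation of $m+2\pmod{k-1}$ when it equals $0$) are handled correctly.
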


\begin{proof}
This is another consequence of Theorem \ref{orienteddegreesequences}.
\end{proof}

\begin{wn}\label{howmany}
For every $l\in \mathbb{N}$, and for every $0\leq m<k-1$, in the graph $\knsu{k}{2+l(k-1)+m}$ there are $\:m(l+1)$ vertices of degree $\,l+2$ and $\;l(k-m-1)+2$ vertices of degree $\,l+1$. \qed
\end{wn}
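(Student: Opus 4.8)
The plan is to read both counts straight off Corollary~\ref{twodegrees}, treating the degenerate value $m=0$ separately. When $m=0$ the graph $\knsu{k}{2+l(k-1)}$ is $(l+1)$-regular by Corollary~\ref{regular}, so it has $0=m(l+1)$ vertices of degree $l+2$ and $n=l(k-1)+2=l(k-m-1)+2$ vertices of degree $l+1$, as claimed. So assume $0<m<k-1$. Corollary~\ref{twodegrees} tells us that in $\knsu{k}{n}$ with $n=2+l(k-1)+m$ only the degrees $a=l+1$ and $b=l+2$ occur, and that the degree sequence is the length-$(k-1)$ block $B=(a,\underbrace{b,\ldots,b}_{m},\underbrace{a,\ldots,a}_{k-m-2})$ repeated, the final copy being truncated to length $(m+2)\bmod(k-1)$. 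Since $B$ contains exactly $m$ entries equal to $b$ by construction, the statement reduces to counting how many full copies of $B$ there are and how many degree-$b$ vertices sit in the truncated tail; the degree-$a$ count then follows by subtracting from $n$.

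First I would write $n=l(k-1)+(m+2)$ and use $0<m<k-1$ (so $3\le m+2\le k$) to split into two cases. If $m+2\le k-1$, there are $l$ full blocks and the tail is the length-$(m+2)$ prefix $(a,\underbrace{b,\ldots,b}_{m},a)$ of $B$ (which is all of $B$ when $m=k-3$), again with exactly $m$ vertices of degree $b$; hence there are $(l+1)m$ vertices of degree $b$. If $m=k-2$, then $m+2=k$, so $n=(l+1)(k-1)+1$: there are $l+1$ full blocks — each carrying $m=k-2$ vertices of degree $b$, since now $k-m-2=0$ and $B=(a,\underbrace{b,\ldots,b}_{k-2})$ — followed by a tail of length $k\bmod(k-1)=1$ consisting of one vertex of degree $a$; the count is again $(l+1)(k-2)=(l+1)m$. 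Either way there are $m(l+1)$ vertices of degree $l+2$, and therefore $n-m(l+1)=2+l(k-1)+m-m(l+1)=l(k-m-1)+2$ vertices of degree $l+1$.

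The one genuinely fiddly point — the main obstacle — is the truncation bookkeeping, and in particular the boundary value $m=k-2$, where $B$ degenerates, has no trailing $a$'s, and the tail collapses to length $1$; this is precisely where an off-by-one slip would occur. I would cross-check against a small instance such as $\knsu{5}{11}$ in Figure~\ref{kns511} ($l=2$, $m=1$, so the formulas predict $3$ vertices of degree $4$ and $8$ of degree $3$, which matches the degree sequence obtained from Theorem~\ref{orienteddegreesequences}).

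As an independent verification that bypasses the truncation analysis entirely — and works uniformly for $m=0$ as well — let $\alpha,\beta$ be the numbers of vertices of degree $l+1$ and $l+2$. Then $\alpha+\beta=n$ and, summing degrees, $(l+1)\alpha+(l+2)\beta=2|E(\knsu{k}{n})|$; since $(l+1)\alpha+(l+2)\beta=(l+1)n+\beta$, this gives $\beta=2|E|-(l+1)n$. By Fact~\ref{lengthofshortcut} the admissible arc lengths are $1+l'(k-1)$ for $0\le l'\le l$ (because $\lfloor(n-2)/(k-1)\rfloor=l$), each length $d$ carried by $n-d$ arcs, so $|E|=\sum_{l'=0}^{l}(n-1-l'(k-1))=(l+1)(n-1)-(k-1)\frac{l(l+1)}{2}$. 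Hence $\beta=2|E|-(l+1)n=(l+1)\bigl(n-2-l(k-1)\bigr)=(l+1)m$ and $\alpha=n-\beta=l(k-m-1)+2$, in agreement with the block argument.
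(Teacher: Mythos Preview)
Your proof is correct. The paper itself offers no argument at all: the corollary is simply marked with a \qed\ and left as an immediate consequence of Corollaries~\ref{regular} and~\ref{twodegrees}. Your block-counting argument makes explicit precisely the computation the paper suppresses, including the careful handling of the boundary case $m=k-2$ where the block $B$ has no trailing $a$'s. The second, edge-counting verification via Fact~\ref{lengthofshortcut} is an independent route not present in the paper; it is arguably cleaner, since it sidesteps the truncation bookkeeping entirely and treats $m=0$ uniformly, at the modest cost of computing $|E|$ explicitly.
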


As an example, below are the degree sequences for \ktrans[5]\ closures of the paths on $10, 11, 12, 13$ and $14$ vertices:
\begin{itemize}
 \item[] for \knsu{5}{10}: $(3,3,3,3,3,3,3,3,3,3)$; $10=2+2(5-1)+0$, $l=2$ and $m=0$, by Corollary \ref{regular} this graph is 2+1=3 regular;
 \item[] for \knsu{5}{11}: $(3,4,3,3,3,4,3,3,3,4,3)$; $11=2+2(5-1)+1$, $l=2$ and $m=1$, by Corollary \ref{twodegrees} this sequence is built from repeated blocks $(3,4,3,3)$;
 \item[] for \knsu{5}{12}: $(3,4,4,3,3,4,4,3,3,4,4,3)$; $12=2+2(5-1)+2$, $l=2$ and $m=2$, by Corollary \ref{twodegrees} this sequence is built from repeated blocks $(3,4,4,3)$;
 \item[] for \knsu{5}{13}: $(3,4,4,4,3,4,4,4,3,4,4,4,3)$; $13=2+2(5-1)+3$, $l=2$ and $m=3$, by Corollary \ref{twodegrees} this sequence is built from repeated blocks $(3,4,4,4)$;
 \item[] for \knsu{5}{14}: $(4,4,4,4,4,4,4,4,4,4,4,4,4,4)$; $14=2+3(5-1)+0$, $l=3$ and $m=0$, by Corollary \ref{regular} this graph is 3+1=4 regular. 
\end{itemize}

\medskip
Recall that by degree of a vertex $v$ in a digraph we mean a pair\\ $\left(\mathrm{indegree}(v), \mathrm{outdegree}(v)\right)$.

For oriented graphs \kn\ we can observe the following: 

\begin{wn}\label{orienteddegree}
Every constant subsequence in the degree sequence of the non regular graph \knu\ is also the constant subsequence in the degree sequence of the oriented graph \kn.  \qed
\end{wn}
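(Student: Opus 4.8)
The plan is to use Theorem~\ref{orienteddegreesequences} to pin down \emph{where} the indegree and \emph{where} the outdegree change along the sequence of vertices $1,\dots ,n$, and then to check that in the non-regular case these two sets of ``break points'' are disjoint. Write $n = 2 + l(k-1) + m$ with $0\le m< k-1$; by Corollaries~\ref{regular} and~\ref{twodegrees}, non-regularity of \knu\ means precisely $m\neq 0$. From the indegree block structure in Theorem~\ref{orienteddegreesequences} (one block of length $1$, then blocks of length $k-1$, then a final block of length $m+1$) one reads off that $\mathrm{indeg}(i)\neq \mathrm{indeg}(i+1)$ only if $i\equiv 1\pmod{k-1}$; likewise, since the outdegree sequence consists of a first block of length $m+1$ followed by blocks of length $k-1$, $\mathrm{outdeg}(i)\neq \mathrm{outdeg}(i+1)$ only if $i\equiv m+1\pmod{k-1}$.

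Next I would invoke the hypothesis $m\neq 0$: combined with $0\le m< k-1$ it gives $m\not\equiv 0\pmod{k-1}$, hence $1\not\equiv m+1\pmod{k-1}$, so no single step $i\to i+1$ changes both the indegree and the outdegree. Since $\mathrm{indeg}$ is non-decreasing and $\mathrm{outdeg}$ is non-increasing (Theorem~\ref{orienteddegreesequences}), at a step where only one of them changes the unoriented degree $\deg(i)=\mathrm{indeg}(i)+\mathrm{outdeg}(i)$ strictly changes; therefore, if $\deg(i)=\deg(i+1)$ then neither $\mathrm{indeg}$ nor $\mathrm{outdeg}$ changes at that step, i.e. the pair $\big(\mathrm{indeg}(i),\mathrm{outdeg}(i)\big)$ is unchanged. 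Iterating along a maximal run of equal values of $\deg$ --- that is, along any constant subsequence of the degree sequence of \knu\ --- shows the pair is constant on it, which is exactly the assertion for \kn.

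The only part needing care is the bookkeeping of the first paragraph, namely translating the block lengths of Theorem~\ref{orienteddegreesequences} into the congruences $i\equiv 1$ and $i\equiv m+1\pmod{k-1}$; once that is done the argument is immediate, so I do not expect a genuine obstacle here. (One even gets, via Corollary~\ref{twodegrees}, that the two break-point sets \emph{together} are exactly the places where $\deg$ changes, so the two block decompositions coincide; but only the stated one-sided inclusion is required.)
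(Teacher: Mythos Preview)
Your argument is correct: the key observation is exactly that, writing $n=2+l(k-1)+m$ with $0<m<k-1$, the indegree jumps only at $i\equiv 1\pmod{k-1}$ while the outdegree jumps only at $i\equiv m+1\pmod{k-1}$, and $m\neq 0$ (together with $k-1\ge 2$, which is forced since $\knu[2]=K_n$ is regular) keeps these residue classes disjoint; monotonicity of the two sequences then finishes it.

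The paper itself offers no argument for this corollary beyond the \qed\ marker, treating it as immediate from Theorem~\ref{orienteddegreesequences}. Your write-up is precisely the computation one has to do to see why it \emph{is} immediate, so there is nothing to compare: you have supplied the details the paper omits, along the same line of reasoning it implicitly invokes.
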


For example, the degree sequence for \knsu{5}{12} is $(3,4,4,3,3,4,4,3,3,4,4,3)$, and the degree sequence for \kns{5}{12} is $((0,3),(1,3),(1,3),(1,2),(1,2),(2,2),(2,2),(2,1),$ $(2,1),(3,1),(3,1),(3,0))$.  

\medskip

Recall that an oriented graph $G$ is irregular if for every two vertices $v_i, v_j \in V(G)$, $v_i \neq v_j$, their degrees are different.

Strightforward consequence of Corollary \ref{orienteddegree} is that graphs \kn\ for $k>3$ are not irregular. The natural question is: are the graphs \kn[3] irregular? The answer is: 

\begin{tw}
Oriented graphs \kn[3] are irregular iff $n$ is odd. 
\end{tw}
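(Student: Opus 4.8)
The plan is to compute explicitly the in/out-degree sequence of $\kn[3]$ via Theorem~\ref{orienteddegreesequences} with $k=3$, and then decide when all the resulting degree pairs $(\mathrm{indeg}(i),\mathrm{outdeg}(i))$ are distinct. With $k=3$ the block length $k-1=2$, so write $n=2+2l+m$ with $m\in\{0,1\}$; this splits into two cases according to the parity of $n$. First I would treat $n$ odd, so $m=1$ and $n=2l+3$. By Theorem~\ref{orienteddegreesequences} the indegree sequence is $(0,1,1,2,2,\dots,l,l,l+1,l+1)$ and the outdegree sequence is $(l+1,l+1,l,l,\dots,2,2,1,1,0)$; pairing them index by index gives the degree sequence $((0,l+1),(1,l+1),(1,l),(2,l),(2,l-1),(3,l-1),\dots)$. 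The key structural observation is that along this sequence the sum $\mathrm{indeg}(i)+\mathrm{outdeg}(i)$ is \emph{not} constant (the graph is non-regular for $m=1$ by Corollary~\ref{twodegrees}), and in fact consecutive pairs alternate between sum $l+1$ and sum $l+2$; moreover within the pairs of a fixed sum the indegree strictly increases as $i$ increases. I would package this into the statement that the map $i\mapsto(\mathrm{indeg}(i),\mathrm{outdeg}(i))$ is injective, hence $\kn[3]$ is irregular.

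For the converse direction, suppose $n$ is even, so $m=0$ and $n=2l+2$. Then Corollary~\ref{regular} already tells us $\knu[3]$ is $(l+1)$-regular as an unoriented graph, i.e.\ $\mathrm{indeg}(i)+\mathrm{outdeg}(i)=l+1$ for every vertex $i$. Now it suffices to exhibit two distinct vertices with the same pair. Using Theorem~\ref{orienteddegreesequences}, the indegree sequence is $(0,1,1,2,2,\dots,l,l,l+1)$, so the value $1$ is attained at (at least) two consecutive vertices, say vertices $2$ and $3$; since the total degree is the constant $l+1$, both of these have degree pair $(1,l)$, so the graph is not irregular. (More generally every interior value $1\le j\le l$ occurs exactly twice in the indegree sequence, giving many repeated pairs.)

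The only real care needed is bookkeeping at the two ends of the sequences and making sure the index alignment between the indegree block $(0,\underbrace{1,\dots,1}_{2},\dots)$ and the \emph{reversed} outdegree block $(\dots,\underbrace{1,\dots,1}_{2},0)$ is done correctly — in particular checking that for $n$ odd the first and last pairs $(0,l+1)$ and $(l+1,0)$ do not accidentally collide with any interior pair, and that the alternation-of-sums argument is not spoiled by boundary effects. This is the step I expect to be the main (though still routine) obstacle; everything else is a direct reading-off from Theorem~\ref{orienteddegreesequences}. A clean way to finish the $n$ odd case rigorously is: if $(\mathrm{indeg}(i),\mathrm{outdeg}(i))=(\mathrm{indeg}(j),\mathrm{outdeg}(j))$ then the sums agree, so $i$ and $j$ lie in the same parity class of positions; but within each parity class the indegrees are strictly monotincreasing in the position, forcing $i=j$.
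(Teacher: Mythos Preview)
Your proposal is correct and follows essentially the same route as the paper's own proof: for the even case you combine the regularity of $\knu[3]$ (Corollary~\ref{regular}) with the repeated indegree values from Theorem~\ref{orienteddegreesequences} to exhibit two vertices with the same degree pair, and for the odd case you use Corollary~\ref{twodegrees} to see that equal total degree forces equal parity of position, then observe that indegrees are strictly increasing along each parity class. The paper argues in exactly this way, only more tersely; your version just makes the index bookkeeping more explicit.
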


\begin{proof}
By Theorem \ref{orienteddegreesequences}, pairs of vertices $2i, 2i+1$ for $1\leq i < n/2$ have the same indegree. 
Because for the even $n$ the graph \knu[3] is regular, so 
\kn[3] is not irregular if $n$ is even. 

Also by Theorem \ref{orienteddegreesequences}, $\mathrm{indegree}(2i) = \mathrm{indegree}(2i-1) +1$ for $1\leq i < n/2$. By Corollary \ref{twodegrees}, for the odd $n$ the degree sequence of the graph \knu[3] is of the form $(a,b,a,\ldots , b,a)$. So for $n$ odd, if for some two vertices their total degrees are equal then its  indegrees are different. Hence \kn[3] is irregular if $n$ is odd. 
\end{proof}

Recall that the tournament \kn[2]\ is irregular for any $n$.

\section{Density}

By density of the graph $G$, $|V(G)|=n$, we mean ratio ``number of edges in the graph $G$''/``number of edges in complete graph $K_n$''; in symbols $\mathrm{Dens}(G) = E(G)/E(K_n) = 2E(G)/(n(n-1))$. 

Recall then for every even $n$, a graph \knsu{3}{n}\ is $\frac{n}{2}$-regular. We have $\frac{1}{4} \, n^2$ edges. So for even $n$, $\mathrm{Dens}(\knsu{3}{n}) = \frac{1}{2}\frac{n}{n-1}$.

For every odd $n$, in a graph \knsu{3}{n}\ there are $\lceil\frac{n}{2}\rceil$ vertices of degree $\lfloor\frac{n}{2}\rfloor$ and $\lfloor\frac{n}{2}\rfloor$ vertices of degree $\lceil\frac{n}{2}\rceil$. We have $\frac{1}{4} \, (n^2-1)$ edges. So for odd $n$, $\mathrm{Dens}(\knsu{3}{n}) = \frac{1}{2}\frac{n+1}{n}$.

Observe that in both cases the density is bigger then $1/2$ and 
$$\lim _{n\rightarrow \infty} \mathrm{Dens}\left(\knsu{3}{n}\right) = 1/2.$$ 

\medskip

We have the following:

\begin{tw}\label{tw-density}
For $k>2$, 
$$\lim _{n\rightarrow \infty} \mathrm{Dens}\left(\knu\right) = 1/(k-1).$$  
\end{tw}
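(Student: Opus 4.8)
The plan is to count the edges of $\knu$ exactly using Fact~\ref{lengthofshortcut}, and then take the limit of the density. By Fact~\ref{lengthofshortcut}, an arc $(i,j)$ exists precisely when $j-i = 1 + l(k-1)$ for some integer $l \geq 0$, with $j \leq n$. So the number of arcs of ``length class $l$'' (i.e. with $j - i = 1 + l(k-1)$) is exactly $n - 1 - l(k-1)$, provided this quantity is positive; the admissible values of $l$ run from $0$ up to $L := \lfloor (n-2)/(k-1) \rfloor$. Hence
\begin{equation*}
E(\knu) = \sum_{l=0}^{L} \bigl(n - 1 - l(k-1)\bigr) = (L+1)(n-1) - (k-1)\frac{L(L+1)}{2}.
\end{equation*}

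Next I would estimate this expression for large $n$. Writing $n - 2 = L(k-1) + m$ with $0 \leq m < k-1$, we have $L = \frac{n-2-m}{k-1}$, so $L \sim n/(k-1)$ as $n \to \infty$. Substituting and keeping only the leading-order term, the first sum behaves like $\frac{n}{k-1}\cdot n = \frac{n^2}{k-1}$ and the subtracted term behaves like $(k-1)\cdot\frac{1}{2}\cdot\frac{n^2}{(k-1)^2} = \frac{n^2}{2(k-1)}$, so
\begin{equation*}
E(\knu) = \frac{n^2}{k-1} - \frac{n^2}{2(k-1)} + O(n) = \frac{n^2}{2(k-1)} + O(n).
\end{equation*}
Then $\mathrm{Dens}(\knu) = \dfrac{2E(\knu)}{n(n-1)} = \dfrac{n^2/(k-1) + O(n)}{n(n-1)} \longrightarrow \dfrac{1}{k-1}$ as $n \to \infty$, which is the claim.

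Alternatively, and perhaps more cleanly, one can avoid the closed-form sum: from Corollary~\ref{howmany} the degree sequence of $\knu$ has, for $n = 2 + l(k-1) + m$, a bounded number of distinct values all of the form $l+1$ or $l+2$, and $l \sim n/(k-1)$; summing degrees gives $2E(\knu) = \sum_v \deg(v) = n\cdot(l+O(1)) = \frac{n^2}{k-1} + O(n)$, and dividing by $n(n-1)$ again yields the limit $1/(k-1)$. I would present whichever of these two routes is shorter given the surrounding notation; the degree-sum route reuses an already-proven corollary and is essentially immediate.

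The only real subtlety — hardly an obstacle — is bookkeeping the floor functions and the residue $m$ so that the error terms are genuinely $O(n)$ uniformly in $n$ (for fixed $k$); since $k$ is a constant here and $0 \leq m < k-1$ is bounded, every discrepancy introduced by rounding contributes at most $O(n)$ to $E(\knu)$ and hence vanishes after division by $n(n-1)$. One should also note in passing that the case $k = 2$ is genuinely excluded, consistent with the hypothesis $k > 2$, since $\knu[2] = K_n$ has density $1$, matching the ``formula'' $1/(k-1) = 1$ only in that degenerate sense.
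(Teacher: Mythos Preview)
Your proposal is correct. Your second route---summing degrees via Corollary~\ref{howmany}---is exactly what the paper does, except that the paper carries out the algebra explicitly: it writes the density as
\[
\frac{m(l+1)(l+2)+\bigl(2+l(k-m-1)\bigr)(l+1)}{n(n-1)}
\]
and expands this into a ratio of quadratics in $l$ before letting $l\to\infty$ along each fixed residue class $m$. Your asymptotic shortcut ($2E=n(l+O(1))$) reaches the same conclusion with less bookkeeping.

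Your first route, counting arcs by their ``length class'' $j-i=1+l(k-1)$ directly from Fact~\ref{lengthofshortcut}, is a genuinely different and somewhat cleaner argument: it bypasses the degree-sequence corollaries entirely and gives a closed form for $E(\knu)$ in one line. The trade-off is that the paper's approach reuses structural results it has already established, while yours is self-contained but duplicates a small amount of that structure. Either is perfectly acceptable here; the edge-count version is arguably more transparent about \emph{why} the answer is $1/(k-1)$, since one sees immediately that roughly a $1/(k-1)$ fraction of the possible differences $j-i$ are realized as arcs.
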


\begin{proof}
By Corollary \ref{howmany}, 
$$\mathrm{Dens}\left(\knsu{k}{(2+l(k-1)+m)}\right)=\frac{m(l+1)(l+2)+\left(2+l(k-m-1)\right)(l+1)}{n(n-1)}.$$
By standard calculation we get 
\begin{equation}\nonumber
\begin{split}
\mathrm{Dens}\left(\knsu{k}{(2+l(k-1)+m)}\right)&=\frac{(l+1)\left(m(l+2)+(2+l(k-m-1)\right)}{(2+l(k-1)+m)(1+l(k-1)+m)}=\\ 
&= 
\frac{l^2(k-1)+l(k+2m+1)+2(m+1)}{l^2(k-1)^2+l(k-1)(2m+3)+(m+1)(m+2)}.
\end{split}
\end{equation}
Recall that $n=2+l(k-1)+m$, where $k$ and $m$ are fixed, so when $l\rightarrow \infty$, then also $n\rightarrow \infty$. So $\lim _{n\rightarrow \infty} \mathrm{Dens}(\knu) = 1/(k-1).$  
\end{proof}

Obviously, for $k=2$, $\mathrm{Dens}(\knsu{2}{n}) = 1$.

\section{Open problems}

The main open problem concerning \ktrans\ closures in general, is to state what properties of an oriented graph $G$ guarantee the existence of $\TkC\left(G\right)$.

There are also some other special classes of oriented graphs, such as cycles (with different orientations) and trees, for which there is a chance to obtain interested properties for their $k$-transitive closures. 

\section*{Acknowledgments}

We acknowledge the support by the UJK grant no.\ 612439. 

Some of the results contained in this paper were presented at the 5th Polish Combinatorial Conference, B{\c{e}}dlewo, September 22--26, 2014. The author wants to express his thanks to Professor Zsolt Tuza for pointing to valuable references.


\end{document}